\newcommand{\R}{\mathbb{R}}
\newcommand{\bmat}[1]{\begin{bmatrix}#1\end{bmatrix}}
\newcommand{\norm}[1]{\left\lVert{#1}\right\rVert}
\newcommand{\ip}[2]{\left\langle #1, #2 \right\rangle}
\newcommand{\mcl}[1]{\mathcal{ #1}}
\newcommand{\mbf}[1]{\mathbf{ #1}}
\newtheorem{thm}{Theorem}
\newtheorem{lem}[thm]{Lemma}
\newcommand{\fourpi}[4]{\ensuremath{\mcl{P}{\tiny\bmat{#1,& \hspace{-3mm}#2 \\ #3,& \hspace{-3mm} \left\{#4\right\} }}}}
\begin{document}
	
	\title{\LARGE \bf {$H_\infty$-Optimal Observer Design for Linear Systems with Delays in States, Outputs and Disturbances
		}
	}

	\author{Shuangshuang Wu\thanks{S. Wu is with the Institute of Electrical Engineering, Yanshan University, Qinhuangdao, 066004, China. e-mail: {\tt \small ssw\underline{\hbox to 2mm{}}0538\underline{\hbox to 2mm{}}ysu@163.com } }, Sachin Shivakumar, Matthew~M.~Peet%
		\thanks{S. Shivakumar, M. Peet are with the School for the Engineering of Matter, Transport and Energy, Arizona State University, Tempe, AZ, 85298 USA. e-mail: {\tt \small sshivak8@asu.edu, \tt \small mpeet@asu.edu } }, Changchun Hua\thanks{C. Hua is with the Institute of Electrical Engineering, Yanshan University, Qinhuangdao, 066004, China. e-mail: {\tt \small cch@ysu.edu.cn } }
	}

	\maketitle

\begin{abstract}              
	This paper considers the $H_\infty$-optimal estimation problem for linear systems with multiple delays in states, output, and disturbances. First, we formulate the $H_\infty$-optimal estimation problem in the Delay-Differential Equation (DDE) framework. Next, we construct an equivalent Partial Integral Equation (PIE) representation of the optimal estimator design framework. We then show that in the PIE framework, the $H_\infty$-optimal estimator synthesis problem can be posed as a Linear PI Inequality (LPI). LPIs are a generalization of LMIs to the algebra of Partial Integral (PI) operators and can be solved using the PIETOOLS toolbox. Finally, we convert the PIE representation of the optimal estimator back into an ODE-PDE representation - a form similar to a DDE, but with corrections to estimates of the infinite-dimensional state (the time-history). Numerical examples show that the synthesis condition we propose produces an estimator with provable $H_\infty$-gain bound which is accurate to 4 decimal places when compared with results obtained using Pad\'e-based discretization.
\end{abstract}

\section{Introduction}

In most practical control scenarios, feedback control requires the use of sensors to measure the current state of the system. However, such sensors are often noisy and can measure only a small subset of the required state variables~\cite{Smith:57,Zhou:13,Wang:19}. For Ordinary Differential Equations (ODEs), the problem of state estimation has been largely solved, with special cases including the Luenberger observer~\cite{Luenberger:71}, the Kalman Filter~\cite{Kalman:95}, and the LMI for $H_\infty$-optimal state estimation~\cite{Fridman:01}. When delays are introduced, however (e.g. in state, input or output), estimators designed for the undelayed ODE can destabilize if applied to the resulting Delay-Differential Equation (DDE) System. Consequently, the problem of designing stable or optimal observers for systems with delay has received significant attention in recent years - See, e.g. \cite{Zhou:13,Fridman:01} and the references therein.  Specifically, suppose we consider the problem of designing a state estimator for a DDE system of the form

\vspace{-0.5cm}
{\small{
\begin{align}
&\dot{x}(t)=A_0x(t)+Bw(t)+\sum_{i=1}^K(A_{i}x(t-\tau_i)+B_iw(t-\tau_i))\notag \\
&z(t)=C_{1}x(t)+D_1w(t)+\sum_{i=1}^K(C_{1i}x(t-\tau_i)+D_{1i}w(t-\tau_i))\notag\\
&y(t)=C_{2}x(t)+D_2w(t)+\sum_{i=1}^K(C_{2i}x(t-\tau_i)+D_{2i}w(t-\tau_i))
\label{eqn:nominal}
\end{align}}} where $x(t)\in \R^n$ is the state, $w(t)\in \R^r$ is an external disturbance input with $w(0)=0$,  $z(t)\in \R^{p}$ is the regulated output, and $y(t)\in \R^{q}$ is the measured output.

Most work on estimator design has assumed the estimator itself is modelled as a DDE - e.g.~\cite{Fridman:01,Fattouh:00}. However, an alternative body of work has recently emerged which argues that the optimal estimator for a DDE is not itself a DDE, but rather an ODE coupled with PDE~\cite{Peet:19_estimator}, where the PDE represents not only transport, but also allows for corrections to the estimate of the state-history of the DDE (See $\hat{\mbf x} $ in Eqn.~\eqref{Eqn:x_hatx_1}). In this context, SOS and backstepping methods for observer design in the coupled ODE-PDE framework have been developed wherein the observer simultaneously estimates both the current state and the history of the state~\cite{Krstic:08,Ali:18,Peet:19_estimator}.

Unfortunately, however, there several limitations in these previous efforts. Specifically, while the backstepping transformation method applied in \cite{Krstic:08} is guaranteed to produce a stable estimator if one exists, this estimator is not guaranteed to be optimal in any sense. Meanwhile, the SOS methods employed in~\cite{Peet:19_estimator} and~\cite{Peet:14}, while highly accurate and similar in structure to the observers proposed in this paper, were unable to handle delays in the inputs or outputs. It is our belief that this inability to handle input and output delays is due to problems in formulation arising from use of the coupled ODE-PDE representation. Specifically, disturbances and output which occur at the boundary of the transport equations are not well-modeled when using either as inputs/outputs to either ODE or PDE state (current state or history). For this reason, in this manuscript, we do not use the ODE-PDE representation, but rather look to the Partial Integral Equation (PIE) representation of the DDE system, wherein boundary conditions are not auxiliary, but rather eliminated by incorporating their effect directly into the dynamics of the system.

For an example illustrating the importance of signal delays in estimator design, we refer to the model of a mining cable elevator system in~\cite{Wang:19}, which has sensor output delays due to wireless propagation delay from the elevator at the bottom of the shaft (over 2000 m underground) to the control center on the ground, and disturbance delays, where the disturbances are vibrations caused by deformation of the cage and is coupled to the shaft structure and cable tension. Failure to accurately account for output and disturbance delays can lead to chattering effects or even instability.

Having motivated the need for optimal observer design for systems with input, output, and state-delay, we now turn to the contributions of this manuscript. Our first contribution is to reformulate the nominal DDE in Eq.~\eqref{eqn:nominal} as an equivalent Partial Integral Equation (PIE)~\cite{Peet:19_tds}. By eliminating boundary conditions, the PIE representation allows us to model the effect of disturbances on the dynamics and consequently pose an $H_\infty$-optimal estimator design problem wherein the objective is to design an $H_{\infty}$-optimal estimator which uses the measured output $y$ to construct an estimate of $x$ and $z$ while minimizing $\gamma:=\text{sup}_{w\in L_2}\frac{\lVert z_e\rVert_{L_2}}{\lVert w\rVert_{L_2}}$, where $z_e(t)=\hat z(t)-z(t)$ denotes the error between $z(t)$ and its estimate $\hat z(t)$. Furthermore, the use of the PIE formulation  - parameterized by Partial Integral (PI) operators allows us to generalize the LMI for $H_\infty$-optimal estimation of ODEs to a convex Linear PI Inequality (LPI) which solves the $H_\infty$-optimal observer synthesis problem for the given class of PIEs. Next, we solve the resulting LPI for optimal observer synthesis using the PIETOOLS Matlab toolbox~\cite{Sachin:19}. Finally, we take the resulting estimator, formulated as a PIE, and convert this back to a coupled ODE-PDE in order to allow for efficient implementation.

\vspace{-0.1cm}
\subsection{Notation}
Shorthand notation used throughout this paper includes the Hilbert spaces $L_2^m[X]:=L_2(X;\R^m)$ of square integrable functions from $X$ to $\R^m$ and $W_2^m[X]:=W^{1,2}(X;\R^m)=H^1(X;\R^m)=\{x:x,\partial_s x\in L_2^m[X]\}$; $L_2^m$ and $W_2^m$ are used when domains are clear from context. Furthermore, the extension $W_2^{n\times m}[X]:=W^{1,2}(X;\R^{n\times m})$ is used to denote matrix-valued functions. $I$ denotes the identity matrix. A block-diagonal matrix is denoted by $\text{diag}\{\cdots\}$. An operator $\mathcal{P}:Z\rightarrow Z$ is positive on a subset $X$ of Hilbert space $Z$ if $\left< x, \mathcal{P}x\right>_Z\geq 0$ for all $x\in X$. $\mathcal{P}$ is coercive on $X$ if $\left< x, \mathcal{P}x\right>_Z\geq\epsilon\lVert x \rVert^2_Z$ for some $\epsilon>0$ for all $x\in X$. If $\mathcal{P}^{1}$ and $\mathcal{P}^{2}$ are two linear operators then $\left(\mathcal{P}^{1}\right)^{*}$ stands
for the adjoint of $\mathcal{P}^{1}$ and $\mathcal{P}^{1} \mathcal{P}^{2}$ represents composition of those operators in shown order. For brevity, symmetric components of a block-operator are denoted by $(\cdot)$ and adjoints by $(\cdot)^*$. The space $Z_{m,n}$:=$\R^m\times$ $L_2^{n}[-1,0]$ is an inner-product space with the inner product defined as
\begin{align*}
\left<
\bmat{
	y\\
	\psi
},\bmat{
	x\\
	\phi
}
\right>_{Z_{m,n}}=y^Tx+\int_{-1}^0\psi(s)^T\phi(s)ds,
\end{align*} where $x, y \in \R^m$ and $\psi, \phi \in L_2^{n}[-1,0]$.

\section{Linear PIE Representation Of Time-Delay Systems}
In this section, we present the PIE representation of time delay systems. PIE representation is used, instead of ODE-PDE representation, because the operators in PIE representation are bounded. Furthermore, unlike coupled ODE-PDE representation of time-delay systems, PIE representation do not require boundary conditions and the solution of PIE systems do not have additional continuity constraints.
\subsection{Linear PIEs}
A general class of linear PIEs system is defined as follows
\begin{align}
\mcl T\dot{\mbf{x}}(t)&+\mcl B_T\dot w(t)=\mcl{A}\mbf{x}(t)+\mcl{B} \omega (t)\notag\\
z(t)&=\mcl{C}_1\mbf{x}(t)+\mcl D_1 \omega(t)\notag\\
y(t)&=\mcl{C}_2\mbf{x}(t)+\mcl D_2 \omega(t) \label{eqn:DPS}
\end{align}
where $\mcl{T}$, $\mcl{B_T}$, $\mcl{A}$, $\mcl{C}_1$, $\mcl{C}_2$, $D_1$ and $\mcl{D}_2$ are all \textbf{Partial Integral (PI) operators} with the following form
\begin{align}
&\left(\fourpi{P}{Q_1}{Q_2}{R_i}\bmat{
	x  \\
	\phi
}\right)(s):=\label{eqn:PQRS}\notag\\&\hspace{0.8cm}\bmat{
	Px+\int_{-1}^0Q_{1}(s)\phi(s)ds\\\quad \\(Q^T_{2}(s)x+R_{0}(s)\phi(s)+\int_{-1}^sR_{1}(s,\theta)\phi(\theta)d\theta\\+\int_{s}^0R_{2}(s,\theta)\phi(\theta)d\theta)
}.
\end{align}

For any given $w\in W^{1,2}[0,\infty)$, and
$\mbf x_0\in \R\times L_2[-1,0]$, suppose
$\mbf x$ is  Fr\'echet differentiable almost everywhere on $[0,\infty)$, $\mbf x(0)=\mbf x_0$, and Eq. \eqref{eqn:DPS} are satisfied for all $t\geq 0$. Then $\mbf x(t):[0, \infty)\rightarrow \R\times L_2[-1,0]$, $ z(t):[0, \infty)\rightarrow \R$, $ y(t):[0, \infty)\rightarrow \R$ satisfy the PIE defined by Eq.~\eqref{eqn:DPS_er}. For more details on PI operators, please see \cite{Sachin:19}.



\subsection{Representing Time Delay Systems as Linear PIEs}

Linear time delay systems in the representation of DDEs can be converted to linear PIEs for special definitions of the PI operators $\mcl T, \mcl B_T, \mcl A, \mcl B, \mcl C_1, \mcl C_2, \mcl D_1, \mcl D_2$. For DDEs~\eqref{eqn:nominal} defined by $\tau_i$ and the matrices $A_i$, $B_i$, $C_i$, $C_{ij}$, $D_i$ and $D_{ij}$  we define the PI operators
\begin{align}
\mcl T&:=\fourpi{I}{0}{T_0}{0,0,-I},& \mcl B_T&:=\fourpi{0}{0\mkern-9.5mu/}{T_1}{0\mkern-9.5mu/}, \notag\\
\mcl A&:=\fourpi{A_0+\sum_{i=1}^K A_i }{\tilde A}{0}{H,0,0},
& \mcl B&:=\fourpi{B+\sum_{i=1}^KB_{i}}{0\mkern-9.5mu/}{0}{0\mkern-9.5mu/}, \notag
\\
\mcl C_1&:=\fourpi{C_{1}+\sum_{i=1}^KC_{1i}}{\tilde C_1}{0\mkern-9.5mu/}{0\mkern-9.5mu/},
& \mcl C_2&:=\fourpi{C_{2}+\sum_{i=1}^KC_{2i}}{\tilde C_2}{0\mkern-9.5mu/}{0\mkern-9.5mu/}, \notag
\\\mcl D_1&:=\fourpi{D_1+\sum_{i=1}^KD_{1i}}{0\mkern-9.5mu/}{0\mkern-9.5mu/}{0\mkern-9.5mu/},
& \mcl D_2&:=\fourpi{D_2+\sum_{i=1}^KD_{2i}}{0\mkern-9.5mu/}{0\mkern-9.5mu/}{0\mkern-9.5mu/} \label{Eqn:PIs}.
\end{align}
where
{\small{
\begin{align}
&C_{ri}=\bmat{I\\0}, \quad B_{ri}=\bmat{0\\I},  \quad A_{ki}=\bmat{A_i&B_i}, \notag\\
&C_{k1i}=\bmat{C_{1i}&D_{1i}},  \quad C_{k2i}=\bmat{C_{2i}&D_{2i}},\notag\\
&T_0=\bmat{C_{r1}\\ \vdots\\ C_{rK}}, \quad
T_1=\bmat{B_{r1}\\ \vdots\\ B_{rK}}, \quad H=\text{diag}\left\{\frac{1}{\tau_1}I,\cdots,\frac{1}{\tau_K}I \right\}, \notag\\
&\tilde A=-\bmat{A_{k1},&\cdots,&A_{kK}},\quad
\tilde C_1=-\bmat{C_{k11},&\cdots,&C_{k1K}}, \notag\\
& \tilde C_2=-\bmat{C_{k21},&\cdots,&C_{k2K}}.\label{Eqn:ACC}
\end{align}}}
We give the following lemma.

\begin{lem}
	\label{lem:ODEPDE_PIE} Suppose $\mcl T, \mcl B_T, \mcl A, \mcl B, \mcl C_1, \mcl C_2, \mcl D_1, \mcl D_2$ are as defined in Eq.~\eqref{Eqn:PIs}. For given $w\in W^{1,2}[0,\infty)^r$, if $x$, $z$, and $y$ satisfy Eq.~\eqref{eqn:nominal}, then $z$ and $y$ also satisfy the PIE \eqref{eqn:DPS} with
	\begin{align}
	\mbf x(t)=\bmat{x(t)\\\partial_s\phi_1(t,\cdot)\\\vdots\\\partial_s\phi_K(t,\cdot)}\label{Eqn:x}
	\end{align} where $\phi_i(t,s)=C_{ri}x(t+\tau_is)+B_{ri}w(t+\tau_is)$ for $C_{ri}$ and $B_{ri}$ as defined in \eqref{Eqn:ACC}.
	Furthermore, if $\mbf x$, $z$ and $y$ satisfy the PIE defined by Eq.\eqref{eqn:DPS},
	then $x, z$ and $y$ satisfy Eq.~\eqref{eqn:nominal} where
	\begin{align}
	\bmat{x(t)\\\cdot}=\mcl T\mbf x(t)+\mcl B_Tw(t).
	\end{align}
\end{lem}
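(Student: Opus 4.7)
The plan is to verify both directions by direct substitution, with every key cancellation supplied by the fundamental theorem of calculus. For the forward direction, observe first that the lower block of $\mbf x(t)$ in~\eqref{Eqn:x} is the stack of $\partial_s\phi_i(t,\cdot)$, with $\phi_i$ as defined in the statement. The first step is to establish the algebraic identity
\[
\mcl T\mbf x(t)+\mcl B_T w(t)=\bmat{x(t)\\\phi_1(t,\cdot)\\\vdots\\\phi_K(t,\cdot)},
\]
which is immediate from the structure of $\mcl T$ (the $R_2$ block is $-I$ and $Q_2$ is built from $T_0$): writing $-\int_s^0\partial_\theta\phi_i(t,\theta)\,d\theta=\phi_i(t,s)-\phi_i(t,0)$ and noting that the boundary value $\phi_i(t,0)=C_{ri}x(t)+B_{ri}w(t)$ cancels exactly against the contributions from $T_0 x(t)$ and $T_1 w(t)$. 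Differentiating this identity in $t$ then gives $\mcl T\dot{\mbf x}(t)+\mcl B_T\dot w(t)=\bmat{\dot x(t)\\\partial_t\phi(t,\cdot)}$, reducing the proof of~\eqref{eqn:DPS} to a block-by-block check of its right-hand side. For the ODE row, using $\tilde A=-[A_{k1},\ldots,A_{kK}]$ together with $A_{ki}\phi_i(t,s)=A_ix(t+\tau_is)+B_iw(t+\tau_is)$, a second application of the fundamental theorem of calculus collapses $\int_{-1}^0\tilde A(\theta)\partial_s\phi(t,\theta)\,d\theta$ into delayed terms which, combined with the $(A_0+\sum A_i)x(t)+(B+\sum B_i)w(t)$ contributions from $\mcl A$ and $\mcl B$, reproduce the $\dot x$ equation of~\eqref{eqn:nominal}. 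For the infinite-dimensional row, $\mcl A\mbf x(t)$ contributes $H\partial_s\phi$, and since $H=\diag\{(1/\tau_i)I\}$ and $\partial_t\phi_i=(1/\tau_i)\partial_s\phi_i$ by the chain rule, this coincides with $\partial_t\phi(t,\cdot)$; the output equations for $z$ and $y$ follow from the same cancellation with $\tilde C_1$ and $\tilde C_2$ in place of $\tilde A$.

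For the converse, suppose $\mbf x,z,y$ satisfy~\eqref{eqn:DPS} and set $\bmat{x(t)\\\phi(t,\cdot)}:=\mcl T\mbf x(t)+\mcl B_T w(t)$. The second row of the PIE dynamics then reduces to the transport equation $\partial_t\phi_i(t,s)=(1/\tau_i)\partial_s\phi_i(t,s)$, whose boundary value $\phi_i(t,0)=C_{ri}x(t)+B_{ri}w(t)$ is pinned by the defining structure of $\mcl T$ and $\mcl B_T$. Solving this transport equation along the characteristics $t+\tau_i s=\mathrm{const}$ forces $\phi_i(t,s)=C_{ri}x(t+\tau_is)+B_{ri}w(t+\tau_is)$, and inserting this expression into the first row of the PIE---running the integration argument from the forward direction in reverse---recovers~\eqref{eqn:nominal} term by term.

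The main obstacle I anticipate is purely notational: carefully propagating the block decompositions $A_{ki}=[A_i,\,B_i]$, $C_{ri}=\bmat{I\\0}$, $B_{ri}=\bmat{0\\I}$ and the sign conventions in $\tilde A,\tilde C_1,\tilde C_2$ through the integrals without dropping terms. The only mild analytical point is the regularity hypothesis $w\in W^{1,2}$, which is precisely what is needed for $\partial_s\phi_i\in L_2$, for the pointwise boundary evaluations $\phi_i(t,0)$ to make sense, and for $\mcl T$ and $\mcl B_T$ to commute with $\partial_t$; the underlying integration-by-parts step is otherwise elementary.
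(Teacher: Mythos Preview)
Your argument is correct. The paper's own proof is much terser: it factors the equivalence through the intermediate ODE--PDE system~\eqref{eqn:PDE_ODE} and then invokes Lemma~3 and Lemma~4 of~\cite{Peet:19_tds} for the two legs DDE\,$\leftrightarrow$\,ODE--PDE and ODE--PDE\,$\leftrightarrow$\,PIE, respectively. Your proposal is essentially a self-contained unpacking of exactly those two cited lemmas, carried out in one pass: the transport identity $\partial_t\phi_i=(1/\tau_i)\partial_s\phi_i$ and the boundary relation $\phi_i(t,0)=C_{ri}x(t)+B_{ri}w(t)$ are precisely the ODE--PDE layer, and your fundamental-theorem-of-calculus cancellations are the content of the ODE--PDE\,$\to$\,PIE step. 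So the route is conceptually the same; what your version buys is independence from the external reference and an explicit audit trail for the sign and block conventions in $\tilde A,\tilde C_1,\tilde C_2$, at the cost of length. One small caveat on the converse: solving the transport equation by characteristics gives $\phi_i(t,s)=C_{ri}x(t+\tau_is)+B_{ri}w(t+\tau_is)$ only once the characteristic from $(t,s)$ has reached the boundary $s=0$, i.e.\ for $t+\tau_i s\ge 0$; for earlier times the value is determined by the (implicit) initial history. The paper's proof sidesteps this by deferring to~\cite{Peet:19_tds}, and the lemma as stated is about equivalence of the governing equations rather than well-posedness, so this does not affect correctness---but it is worth a parenthetical remark if you write the argument out in full.
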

\vspace{0.2cm}
\begin{proof}
	For given $w\in W^{1,2}[0,\infty)^r$, suppose $x$, $z$, and $y$ satisfy the DDEs defined by Eq.~\eqref{eqn:nominal}. Define	$\phi_i(t,s)=C_{ri}x(t+\tau_is)+B_{ri}w(t+\tau_is)$ where $C_{ri}, B_{ri}$ are as defined in Eq.~\eqref{Eqn:ACC}.Then from Lemma 3 in \cite{Peet:19_tds}, we get
	$x$, $z$, and $y$ satisfy the following ODE-PDE Eq. \eqref{eqn:PDE_ODE} and vice versa.
	\begin{align}
	&\dot x(t)=A_0x(t)+Bw(t)+B_vv(t)\notag\\
	&z(t)=C_{1}x(t)+D_1w(t)+D_{1v}v(t)\notag\\
	&y(t)=C_{2}x(t)+D_2w(t)+D_{2v}v(t)\notag\\
	&\dot \phi_i(t,s)=\frac{1}{\tau_i}\partial_s \phi_i(t,s),\notag\\
	&\phi(t,0)=C_{ri}x(t)+B_{ri}w(t)\notag\\
	&v(t)=\sum_{i=1}^K C_{vi}\phi_i(t,-1)
	\label{eqn:PDE_ODE}
	\end{align}
	where
	\begin{align}
	&B_v=\bmat{I&0&0},  \ D_{1v}=\bmat{0&I&0},\notag\\
	&D_{2v}=\bmat{0&0&I},  \  C_{vi}=\bmat{A_i&B_i\\C_{1i}&D_{1i}\\C_{2i}&D_{2i}}.	\label{eqn:bv}
	\end{align} 	
	
	Suppose $\mbf x(t)\in Z_{n,K(n+r)}$ is defined as Eq.~\eqref{Eqn:x}, where $x$, $\phi_i$ satisfy the ODE-PDE form~\eqref{eqn:PDE_ODE}, and the PI operators are as defined in Eq.~\eqref{Eqn:PIs}, Then,
	from Lemma 4 in \cite{Peet:19_tds}, we get that $\mbf x$, $z$ and $y$ also satisfy PIEs~\eqref{eqn:DPS} and vice versa. This completes the proof.
\end{proof}

\section{Estimation of Linear PIEs}
For the linear PIEs Eq. \eqref{eqn:DPS}, an estimator in the PIE form is constructed. The coupled system dynamics are as follows
\begin{align}
\mcl T\dot{\mbf{x}}(t)&+\mcl B_T\dot w(t)=\mcl{A}\mbf{x}(t)+\mcl{B} \omega (t),\notag\\
z(t)&=\mcl{C}_1\mbf{x}(t)+\mcl D_1 \omega(t)\notag\\
y(t)&=\mcl{C}_2\mbf{x}(t)+\mcl D_2 \omega(t),
\notag\\
\mcl T\dot{\hat{\mbf{x}}}(t)&=\mcl{A}\hat{\mbf{x}}(t)+\mcl{L}(\hat{y}(t)-y(t))\notag\\
\hat z(t)&=\mcl{C}_1\hat{\mbf{x}}(t)\quad
\hat{y}(t)=\mcl{C}_2\hat{\mbf{x}}(t),\notag\\
\mbf{x}(0)&=\hat{\mbf{x}}(0)=\mbf x_0\in Z \label{eqn:couple_1}
\end{align}
where $\mcl{L}:\R\rightarrow Z$ is a PI operator. Let $\mbf{e}(t):=\hat{\mbf{x}}(t)-\mbf{x}(t)$, then we have
\begin{align}
\mcl T\mbf{\dot{e}}(t)-\mcl B_T\dot w(t)&=(\mcl{A}+\mcl{L}\mcl{C}_2)\mbf{e}(t)-(\mcl{B}+\mcl{L}\mcl D_2) \omega(t)\notag\\
z_e(t)&=\mcl{C}_1\mbf{e}(t)-\mcl D_1\omega(t).\label{eqn:DPS_er}
\end{align}
Then $\mbf e(0)=0$ and the LMI conditions in KYP Lemma for linear ODEs can be extended to linear PIEs using the LPI conditions.
\begin{thm} \label{thm:DPS}
	Suppose there exists a scalar $\gamma>0$ and bounded linear operators $\mcl{P}:Z\rightarrow Z$ is bounded, self-adjoint, coercive and $\mcl{Z}: \R\rightarrow Z$ such that
	\begin{align}
	&\bmat{\mcl B_T^*(\mcl P\mcl B+\mcl Z\mcl D_2)+(\cdot)^*& 0 &(\cdot)^*\\ 0  &0 & 0 \\ -(\mcl P\mcl A+\mcl Z\mcl C_2)^*\mcl B_T& 0 &0}\notag\\
	&\hspace{0.4cm}+\bmat{-\gamma I& -\mcl D_1^T&-(\mcl P\mcl B+\mcl Z\mcl D_2)^*\mcl T\\(\cdot)^*&-\gamma I&\mcl C_1\\(\cdot)^*&(\cdot)^*&(\mcl P\mcl A+\mcl Z\mcl C_2)^*\mcl T+(\cdot)^*}<0.
	\label{eqn:thm_dps}
	\end{align}		
	Then $\mcl{P}^{-1}$ exists and is a bounded linear operator. For any given $w\in W_2$,
	if $z$ and $\hat z$ satisfy Eqn. \eqref{eqn:couple_1} where $\mcl{L}=\mcl{P}^{-1}\mcl{Z}$ for some $\mbf x$ and $\hat{\mbf x}$, then we have
	$\lVert z_e\rVert_{L_2}\leq\gamma\lVert\omega\rVert_{L_2}$ where $z_e(t) = \hat{z}(t)-z(t)$.
\end{thm}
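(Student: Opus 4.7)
My plan is to prove the theorem via a standard KYP-style dissipation argument, with the LPI~\eqref{eqn:thm_dps} playing the role of the finite-dimensional bounded real LMI.

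First, I would dispose of the claim that $\mcl P^{-1}$ exists and is bounded: since $\mcl P$ is bounded, self-adjoint, and coercive on $Z$, the Lax--Milgram theorem (equivalently, the spectral theorem for bounded self-adjoint operators bounded away from zero) immediately gives a bounded inverse, so that $\mcl L := \mcl P^{-1}\mcl Z$ is a well-defined PI operator and the error dynamics~\eqref{eqn:DPS_er} make sense.

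Next, I would introduce the storage functional
\begin{equation*}
V(t) = \langle \eta(t),\, \mcl P\,\eta(t)\rangle_Z, \qquad \eta(t) := \mcl T\mbf e(t) - \mcl B_T w(t).
\end{equation*}
From the error dynamics, $\dot\eta = (\mcl A + \mcl L\mcl C_2)\mbf e - (\mcl B + \mcl L\mcl D_2)w$, with the $\dot w$ contribution cancelling cleanly---this is the whole point of absorbing $\mcl B_T w$ into $\eta$. Moreover, $\mbf e(0)=0$ and $w(0)=0$ force $V(0)=0$, while coercivity of $\mcl P$ gives $V(t)\ge 0$ for all $t\ge 0$.

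Computing $\dot V = 2\operatorname{Re}\langle\dot\eta,\mcl P\eta\rangle$ and using $\mcl P=\mcl P^*$ together with $\mcl P\mcl L = \mcl Z$ (so $\mcl L^*\mcl P = \mcl Z^*$), I would re-express $\dot V$ as a quadratic form in $\xi(t):=[w(t);\,\mbf e(t)]$. The four resulting coefficient blocks are precisely the first of the two block matrices on the left-hand side of~\eqref{eqn:thm_dps}. A Schur complement on the central $-\gamma I$ block of the full LPI---valid because that block is negative definite---then reduces~\eqref{eqn:thm_dps} to a $2\times 2$ LPI whose quadratic form in $\xi$ equals
\begin{equation*}
\dot V(t) + \tfrac{1}{\gamma}\|z_e(t)\|^2 - \gamma\|w(t)\|^2,
\end{equation*}
once the $\mcl C_1,\mcl D_1$ cross-terms are combined using $z_e = \mcl C_1\mbf e - \mcl D_1 w$. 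The LPI therefore yields the pointwise dissipation inequality $\dot V + \tfrac{1}{\gamma}\|z_e\|^2 < \gamma\|w\|^2$. Integrating from $0$ to $T$, using $V(0)=0$ and $V(T)\ge 0$, and letting $T\to\infty$ gives $\|z_e\|_{L_2}^2 \le \gamma^2\|w\|_{L_2}^2$, which is the claim.

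The main obstacle I expect is not the algebra---which mirrors the finite-dimensional bounded real lemma---but the verification that $t\mapsto V(t)$ is absolutely continuous so that the pointwise inequality really integrates to an $L_2$-gain bound. Since $\mbf e$ is only Fr\'echet differentiable almost everywhere and $\mcl T$ is a non-trivial PI operator rather than the identity, this requires using $w\in W^{1,2}$ together with boundedness of all PI operators involved and the PIE solution concept established in the discussion following~\eqref{eqn:DPS}.
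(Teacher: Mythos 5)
Your proposal is correct and takes essentially the same route as the paper: the identical storage functional $V(t)=\ip{\mcl T\mbf e(t)-\mcl B_T w(t)}{\mcl P(\mcl T\mbf e(t)-\mcl B_T w(t))}_Z$, the substitution $\mcl P\mcl L=\mcl Z$, and integration of the dissipation inequality using $V(0)=0$ and $V\ge 0$; the only difference is that the paper absorbs the middle $-\gamma I$ channel by introducing a slack signal $\upsilon_e$ and then setting $\upsilon_e=z_e/\gamma$, which is equivalent to your Schur complement on that block. One small correction that does not affect validity: the coefficient blocks of $\dot V$ are not all contained in the first of the two displayed operator matrices (for instance $(\mcl P\mcl A+\mcl Z\mcl C_2)^*\mcl T+(\cdot)^*$ and $-(\mcl P\mcl B+\mcl Z\mcl D_2)^*\mcl T$ appear in the second), but since only the sum of the two matrices enters the argument, your derivation goes through unchanged.
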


\begin{proof}
Suppose there exist $\gamma$, $\mcl{P}$ and $\mcl{Z}$ that satisfy the assumptions of the Theorem statement and let $\mcl{L}= \mcl{P}^{-1}\mcl{Z}$.
Define the storage functional
\[V(t)=\ip{\mcl T\mbf e(t)-\mcl B_T w(t)}{\mcl P (\mcl T\mbf e(t)-\mcl B_T w(t))}_Z.\]

Obviously,
$V(t)\ge \delta \norm{\mcl{T}\mbf e(t)-\mcl{B}_Tw(t)}_Z^2$
holds for some $\delta>0$ since $\mcl P$ is coercive. Since $\mcl P$ is bounded, self-adjoint, coercive, from Theorem 1 in \cite{Peet:19_control},  $\mcl{P}^{-1}$ exists and is a bounded linear operator. Then, for $\mbf{e}$ and $z_e$ that satisfy \eqref{eqn:DPS_er},
\begin{align*}
&\dot V(t)-\gamma\lVert\omega(t)\rVert^2-\gamma\lVert\upsilon_e(t)\rVert^2+2\ip{\upsilon_e(t)}{ z_e(t)}_Z\\
&=\ip{\mcl T\mbf{e}(t)-\mcl B_T w(t)}{(\mcl P\mcl A+\mcl {ZC}_2)\mbf e(t)}_Z\\&\quad+\ip{(\mcl P\mcl A+\mcl {ZC}_2)\mbf e(t)}{\mcl T\mbf{e}(t)-\mcl B_T w(t)}_Z\\&\quad-\ip{\mcl T\mbf{e}(t)-\mcl B_T w(t)}{(\mcl P\mcl{B}+\mcl Z\mcl D_2) w(t)}_Z\\
&\quad-\ip{\mcl T\mbf{e}(t)-\mcl B_T w(t)}{(\mcl P\mcl{B}+\mcl Z\mcl D_2) w(t)}_Z\\
&\quad-\ip{(\mcl P\mcl{B}+\mcl Z\mcl D_2) w(t)}{\mcl T\mbf{e}(t)-\mcl B_T w(t)}_Z\\
&\quad-\gamma\lVert\omega(t)\rVert^2-\gamma\lVert\upsilon_e(t)\rVert^2+\left<\upsilon_e(t),\mcl{C}_1\mbf e(t)\right>\\&\quad+\left<\mcl{C}_1\mbf e(t),\upsilon_e(t)\right>-\left<\upsilon_e(t),\mcl D_1\omega(t)\right>-\left<\mcl D_1\omega(t),\upsilon_e(t)\right>\\
&=\bmat{w(t)\\v_e(t)\\\mbf x(t)}^T\Bigg(\bmat{\mcl B_T^*(\mcl P\mcl B+\mcl Z\mcl D_2)+(\cdot)^*& 0 &(\cdot)^*\\ 0  &0 & 0 \\ -(\mcl P\mcl A+\mcl Z\mcl C_2)^*\mcl B_T& 0 &0}\\
&\hspace{0.4cm}+\bmat{-\gamma I& -\mcl D_1^T&-(\mcl P\mcl B+\mcl Z\mcl D_2)^*\mcl T\\(\cdot)^*&-\gamma I&\mcl C_1\\(\cdot)^*&(\cdot)^*&(\mcl P\mcl A+\mcl Z\mcl C_2)^*\mcl T+(\cdot)^*}\Bigg )\bmat{w(t)\\v_e(t)\\\mbf x(t)}.
\end{align*} Set $\upsilon_e(t)=\frac{1}{\gamma} z_e(t)$. If Eq.~\eqref{eqn:thm_dps} is satisfied,
then
\begin{align*}
&\dot V(t)-\gamma\lVert\omega(t)\rVert^2+\frac{1}{\gamma}\lVert z_e(t)\rVert^2\leq 0.
\end{align*}
Integration of this inequality with respect to $t$ yields
\[
V(t)-V(0)-\gamma \int_0^t\norm{ w(s)}^2ds+\frac{1}{\gamma}\int_0^t
\norm{z_e(s)}^2 ds\leq 0.
\]

$V(0)=0$ and $V(t)\ge 0$ for any $t\ge0$. Then as $t\to\infty$, we get $\lVert  z_e\rVert_{L_2}\leq\gamma\lVert \omega\rVert_{L_2}$.
%
\end{proof}	
\vspace{-0.1cm}
\section{Estimation of time delay systems}
In this section, the estimator is constructed and using Theorem \ref{thm:DPS}, we get the $H_\infty $ estimation condition of time delay systems defined by Eq.~\eqref{eqn:nominal}.


\subsection{Coupling the DDEs and Estimator Dynamics }
For the plant system \eqref{eqn:nominal} restated here, we construct the estimator dynamics as a ODE-PDE coupled system. The coupled system dynamics are as follows,

\vspace{-0.5cm}
{\small{
\begin{align}
&\dot{x}(t)=A_0x(t)+Bw(t)+\sum_{i=1}^K(A_{i}x(t-\tau_i)+B_iw(t-\tau_i))\notag \\
&z(t)=C_{1}x(t)+D_1w(t)+\sum_{i=1}^K(C_{1i}x(t-\tau_i)+D_{1i}w(t-\tau_i))\notag\\
&y(t)=C_{2}x(t)+D_2w(t)+\sum_{i=1}^K(C_{2i}x(t-\tau_i)+D_{2i}w(t-\tau_i))\notag\\
&\dot {\hat x}(t)=A_0\hat x(t)+B_v\hat v(t)+L_1(\hat y(t)-y(t))\notag\\
&\hat z(t)=C_{1}\hat x(t)+D_{1v}\hat v(t)\notag\\
&\hat y(t)=C_{2}\hat x(t)+D_{2v}\hat v(t)\notag\\
&\dot {\hat \phi}_i(t,s)=\frac{1}{\tau_i}\partial_s \hat \phi_i(t,s)+L_{2i}(s)(\hat y(t)-y(t))\notag\\
&\hat \phi(t,0)=C_{ri}\hat x(t), \notag\\
&\hat v(t)=\sum_{i=1}^K C_{vi}\hat \phi_i(t,-1)
\label{eqn:couple_2}
\end{align}}}
where $\hat{x}(t)$, $\hat{z}(t)$ and $\hat{y}(t)$ are the estimates of $x(t)$, $z(t)$ and $y(t)$, respectively. The matrix $L_1$ and the polynomials $L_{2i}$ are observer gains to be determined.
The matrices $B_v, D_{iv}, C_{ri}, C_{vi}$ are the same ones used to define the ODE-PDE model~\eqref{eqn:PDE_ODE}.

The structure of the estimator allows us to represent Eq. \eqref{eqn:couple_2} as coupled linear PIE \eqref{eqn:couple_1} defined by the PI operators in Eq.~\eqref{Eqn:PIs}, where $\mcl L= \fourpi{L_1}{0\mkern-9.5mu/}{\bmat{L_{21}\\\vdots\\L_{2K}}}{0\mkern-9.5mu/}$. The equivalence between Eq.~\eqref{eqn:couple_2} and Eq.~\eqref{eqn:couple_1} is stated as the Lemma~\ref{lem:Equivalence} in Appendix.


\subsection{Applying Theorem~\ref{thm:DPS} to time delay systems}

\begin{thm}\label{thm:Thm_f}
	Suppose there exists positive scalar $\gamma$, matrix $P\in \R^{n\times n}$, matrix $H$, $\Gamma$, $W$ with appropriate dimensions,  polynomial $Z(s)$, function $R_0 \in W_2^{ns\times ns}[-1,0]$ with $ns=K(n+r)$,
	matrix $Z_1\in \R^{n\times q}$, such that	
	the operator $\mcl P:=\fourpi{P}{HZ(s)}{Z(s)^TH^T}{ R_i}$ with $R_2=R_1=Z(s)^T\Gamma Z(\theta)$ is bounded, self-adjoint, and coercive, and $\mcl Z:=\fourpi{Z_1}{0\mkern-9.5mu/}{Z(s)^TW}{0\mkern-9.5mu/}$ satisfy
	\begin{align}
	&\bmat{\mcl B_T^*(\mcl P\mcl B+\mcl Z\mcl D_2)+(\cdot)^*& 0 &(\cdot)^*\\ 0  &0 & 0 \\ -(\mcl P\mcl A+\mcl Z\mcl C_2)^*\mcl B_T& 0 &0}\notag\\
	&\hspace{0.4cm}+\bmat{-\gamma I& -\mcl D_1^T&-(\mcl P\mcl B+\mcl Z\mcl D_2)^*\mcl T\\(\cdot)^*&-\gamma I&\mcl C_1\\(\cdot)^*&(\cdot)^*&(\mcl P\mcl A+\mcl Z\mcl C_2)^*\mcl T+(\cdot)^*}<0.
	\label{eqn:thm_dps_2}
	\end{align}		
	where the operators $\mcl T, \mcl B_T, \mcl A, \mcl B, \mcl C_1,\mcl C_2, \mcl D_1, \mcl D_2$ are defined as Eqn. \eqref{Eqn:PIs}.
	Then for any given $w\in W^{1,2}[0,\infty)^r$,
	if $z(t)$ and $\hat z(t)$
	satisfy the Eq.~\eqref{eqn:couple_2} where
	
	\vspace{-0.5cm}
	{\small{
			\begin{align}
			L_{1}&=\left(I- \hat H K H^T\right)P^{-1} Z_{1}+\hat{H} K W\notag\\
			\bmat{L_{21}\\\vdots\\L_{2K}}(s)&=R_0(s)^{-1} Z(s)^{T}\left(\hat{H}^{T} Z_{1}+W+\hat{\Gamma}KW\right)\label{Eqn:L_solved}
			\end{align}
			and
			\begin{align*}
			\hat H &=P^{-1} H \left( K H^T P^{-1} H -I- K\Gamma\right)^{-1}\\ K&=\int_{-1}^{0} Z(s)R_0(s)^{-1} Z(s)^{T} ds,\\
			\hat \Gamma&=-(\hat H^T H+\Gamma)(I + K \Gamma)^{-1},
			\end{align*}}} for some $x$, $\hat x$ and $\hat \phi_i$,
	define $z_e(t) = \hat{z}(t)-z(t)$, then we have
	$\lVert z_e\rVert_{L_2}\leq\gamma\lVert \omega\rVert_{L_2}$.
\end{thm}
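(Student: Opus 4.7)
The plan is to reduce Theorem~\ref{thm:Thm_f} to an application of Theorem~\ref{thm:DPS}. Since the LPI condition \eqref{eqn:thm_dps_2} is identical in form to \eqref{eqn:thm_dps}, and the operators $\mcl{P}$, $\mcl{Z}$ in the theorem are assumed bounded, self-adjoint, and coercive (for $\mcl P$), Theorem~\ref{thm:DPS} immediately yields the $H_\infty$-gain bound $\lVert z_e\rVert_{L_2}\le\gamma\lVert \omega\rVert_{L_2}$ for the PIE-form coupled system \eqref{eqn:couple_1} whenever the PIE observer gain is chosen as $\mcl{L}=\mcl{P}^{-1}\mcl{Z}$. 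The remaining work is then of two kinds: (i) relate the ODE--PDE-form estimator \eqref{eqn:couple_2} to its PIE representation \eqref{eqn:couple_1}, and (ii) verify that the explicit matrix/polynomial formulas \eqref{Eqn:L_solved} for $L_1,L_{2i}$ are exactly the four-part components of the PI operator $\mcl{P}^{-1}\mcl{Z}$.

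For step (i), I would invoke Lemma~\ref{lem:Equivalence} from the Appendix, which asserts that the ODE--PDE system \eqref{eqn:couple_2} is equivalent to the PIE system \eqref{eqn:couple_1} when the operators $\mcl T,\mcl B_T,\mcl A,\mcl B,\mcl C_1,\mcl C_2,\mcl D_1,\mcl D_2$ are chosen as in \eqref{Eqn:PIs} and the observer operator has the block-structured form $\mcl L = \fourpi{L_1}{0\mkern-9.5mu/}{[L_{21};\,\ldots\,;L_{2K}]}{0\mkern-9.5mu/}$. This step is essentially the same DDE-to-PIE conversion established in Lemma~\ref{lem:ODEPDE_PIE} for the plant, applied now to the innovation-driven estimator equations, and simply transfers the $H_\infty$ bound from $\mbf e$-dynamics back to the physical error $z_e=\hat z-z$.

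The main technical step is (ii): explicitly computing $\mcl{P}^{-1}\mcl{Z}$ in closed form under the separable ansatz assumed in the theorem. The operator $\mcl P$ has the rank-reduced structure
\[
\mcl{P}=\fourpi{P}{HZ(s)}{Z(s)^TH^T}{R_0(s),\,Z(s)^T\Gamma Z(\theta),\,Z(s)^T\Gamma Z(\theta)},
\]
so its inverse can be obtained by a Schur-complement calculation in the PI algebra (see \cite{Sachin:19}). Concretely, I would write the defining equation $\mcl P\bmat{a\\f(s)}=\bmat{b\\g(s)}$, solve the second component for $f(s)$ in terms of $a$ and $g$ using $R_0(s)^{-1}$, substitute into the first component, and reduce the resulting equation to a finite-dimensional linear system in $a$ whose coefficient matrix involves $P$, $H$, $\Gamma$, and the Gram-like matrix $K=\int_{-1}^0 Z(s)R_0(s)^{-1}Z(s)^T\,ds$. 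The matrices $\hat H$ and $\hat\Gamma$ defined in the theorem statement are precisely the Schur-complement factors that arise in this inversion. Composing the resulting $\mcl{P}^{-1}$ with $\mcl{Z}=\fourpi{Z_1}{0\mkern-9.5mu/}{Z(s)^TW}{0\mkern-9.5mu/}$ and reading off the $\R^n$- and $L_2$-components then yields exactly the expressions for $L_1$ and $L_{2i}(s)$ in \eqref{Eqn:L_solved}.

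The hard part will be the explicit operator inversion in step (ii): keeping track of the four kernel components $P,Q_1,Q_2,R_i$ of a PI operator under inversion and composition is algebraically heavy, and one must repeatedly use the separable structure $Q_1(s)=HZ(s)$, $R_1=R_2=Z(s)^T\Gamma Z(\theta)$ to collapse the infinite-dimensional inversion down to the finite-dimensional Schur algebra involving $K$, $(I+K\Gamma)$, and $KH^TP^{-1}H-I-K\Gamma$. Once these identities are in place, matching $\mcl{P}^{-1}\mcl{Z}$ against the PI operator built from \eqref{Eqn:L_solved} is a direct component-wise comparison, and the theorem follows by combining this identification with Theorem~\ref{thm:DPS} and Lemma~\ref{lem:Equivalence}.
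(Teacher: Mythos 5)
Your proposal follows essentially the same route as the paper's proof: apply Theorem~\ref{thm:DPS} to the PIE-form coupled system, use Lemma~\ref{lem:Equivalence} to transfer between the ODE--PDE estimator \eqref{eqn:couple_2} and the PIE form \eqref{eqn:couple_1}, and verify that the gains \eqref{Eqn:L_solved} are exactly the components of $\mcl{P}^{-1}\mcl{Z}$. The only difference is that where you sketch the separable-kernel (Schur-complement) inversion and composition by hand, the paper simply invokes Lemma~\ref{lem:inverse} (citing \cite{Miao}) and Lemma~\ref{lem:gains}, which encapsulate the same computation.
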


\begin{proof}
	Suppose there exists $\gamma$, matrices $P$, $Z_1$, $H$, $\Gamma$ and $W$, polynomial $Z$ and function $R_0$ such that $\mcl P$, as defined in the Theorem statement, is bounded and coercive and satisfies the LPI \eqref{eqn:thm_dps_2}. Further, for given $w\in W^{1,2}[0,\infty)^r$, let $z$ and $\hat z$
	satisfy the Eq.~\eqref{eqn:couple_2}, where $L_1$ and $L_{2i}$ are as defined in Eq.~\eqref{Eqn:L_solved}, for some $x$ and $\hat x$.
	
	Then $\mcl P^{-1}$ exists, is bounded and using Lemma \ref{lem:inverse} in Appendix, $\mcl{P}^{-1}$ is
	\begin{align}
	{\mcl P}^{-1}:=\fourpi{\hat P}{\hat Q}{\hat Q^T}{\hat R_i}
	\end{align} where
	\begin{align*}
	\hat P & = \left(I- \hat H K H^T\right)P^{-1},\qquad    \hat Q(s)  =\hat H Z(s)R_0(s)^{-1} \\
	\hat R_0(s) & = R_0(s)^{-1},\quad   \hat R_{1}(s,\theta)=\hat R_0^T(s) Z(s)^T\hat \Gamma Z(\theta)\hat R_0(\theta).
	\end{align*}
	Define the PI operator $\mcl{L}$ as
	\begin{align}
	\mcl L= \fourpi{L_1}{0\mkern-9.5mu/}{\bmat{L_{21}\\\vdots\\L_{2K}}}{0\mkern-9.5mu/}
	\end{align}
	where $L_1$ and $L_{2i}$ are as defined in Eq.~\eqref{Eqn:L_solved}.
	Then, from Lemma \ref{lem:gains}, $\mcl{L}=\mcl{P}^{-1}\mcl{Z}$. Thus, $\mcl L$, $\mcl{P}$ and $\mcl{Z}$, satisfy the conditions of Theorem \ref{thm:DPS}.
	
	Since $z$ and $\hat z$ satisfy Eq.~\eqref{eqn:couple_2} for some $x$, $\hat x$ and $\hat \phi_i$, from Lemma~\ref{lem:Equivalence}, we get $z(t)$ and $\hat z(t)$ also satisfy the Eq.~\eqref{eqn:couple_1} for
	\begin{align}
	\mbf x(t)=\bmat{x(t)\\\partial_s\phi_1(t,\cdot)\\\vdots\\\partial_s\phi_K(t,\cdot)}, \quad
	\hat{\mbf x}(t)=\bmat{\hat x(t)\\\partial_s\hat \phi_1(t,\cdot)\\\vdots\\\partial_s\hat \phi_K(t,\cdot)}. \label{Eqn:x_hatx_1}
	\end{align}
	where $\phi_i(t,s)=C_{ri}x(t+\tau_is)+B_{ri}w(t+\tau_is)$.
	We conclude that $z$ and $\hat{z}$ satisfy the conditions of Theorem~\ref{thm:DPS} for the operators $\mcl{P}$, $\mcl{Z}$ and $\mcl{L}$ as defined. Since all conditions of Theorem~\ref{thm:DPS} are satisfied, we conclude that
	$\lVert z_e\rVert_{L_2}\leq\gamma\lVert\omega\rVert_{L_2}$ where $z_e(t) = \hat{z}(t)-z(t)$.
\end{proof}	

\section{Numerical Implementation and Examples}
The LPI in Theorem~\ref{thm:Thm_f} is implemented using the Matlab PIETOOLS toolbox, wherein we minimize $\gamma$, the closed-loop $H_\infty$-performance gain. This toolbox is available online for validation or download from Code Ocean \cite{Sachin:19}. PIETOOLS allows for declaration of PI variables, PI inequality constraints, and manipulation of PI operators as an object class. A selection of the code from this implementation is as follows.

\vspace{-0.5cm}
{\small{
		\begin{align*}
		&\texttt{>> pvar s th gam;}\\
		&\texttt{>> opvar T Bt A B C1 C2 D1 D2;}\\
		&\texttt{>> S=sosprogram([s,th],gam);}\\
		&\texttt{>> [H,P] = sos\_posopvar(H,dim1,X,s,th);}\\
		&\texttt{>> [H,Z] = sos\_opvar(H,dim2,X,s,th);}\\
		&\texttt{>> F1=P*B+Z*D2; F2=P*A+Z*C2;}\\
		&\texttt{>> E = -gam*eye(r)-Bt'*F1-F1'*Bt};\\
		&\texttt{>> Df =[\hspace{0.1cm} E \hspace{1.1cm}  -D1' \ -F1'*T-Bt'*F2;}\\
	    &\texttt{ \hspace{1.3cm}  -D1 \hspace{0.6cm} -gam*eye(p) \quad     C1;}\\
	    &\texttt{ \hspace{0.6cm} -T'*F1-F2'*Bt \ C1' \hspace{0.2cm} F2'*T+T'*F2];}\\
		&\texttt{>> H = sosopineq(H,-Df);}\\
		&\texttt{>> H = sossetobj(H,gam);}\\
		&\texttt{>> H = sossolve(H);}
		\end{align*}
}}
For simulation, a fixed-step forward-difference-based discretization
method is used, with a different set of states
representing each delay channel. In the simulation results
given below, 100 spatial discretization points are used for each
delay channel.

We have applied the resulting code to several representative examples. In each case, we list: $\gamma_{min}$ - the provable bound on the $L_2$-gain from the disturbance $w$ to the regulated output $z_e$ of the $H_\infty$-optimized observer obtained from the LPI; $\gamma_{pade}$ - an estimated achievable $L_2$-gain obtained using LMI methods and a 10th Pad\'e ODE approximation of the DDE; and $\gamma_{real}$ - the observed $L_2$-gain bound obtained by applying a simulation of our optimized estimator to a simulation of the nominal DDE with a representative disturbance signal. Note that because there are no works which address the problem of $H_\infty$-optimal control of systems with input, output, and state delay, we are not able to compare our results with existing literature. However, this is not because of sub-optimality, and indeed, our estimators match or significantly outperform all other estimators when applied to systems lacking input or output delay.

\paragraph{Example 1} The following system is a variation of an example in~\cite{Peet:19_estimator},
\begin{align*}
\dot x(t)&=\bmat{0 &0\\0& 1}x(t)+\bmat{-1 &-1\\0&0.9}x(t-1)+\bmat{1&0\\0&1}w(t)\\
z(t)&=\bmat{1 & 0}x(t)+\bmat{1&10}x(t-1)\\
y(t)&=\bmat{1&10}x(t-1)+\bmat{0&5}w(t-1)
\end{align*}

wherein we have added output and disturbance delay to the dynamics. In this case Theorem~\ref{thm:Thm_f} yields $\gamma_{min}=1.8081$. Meanwhile the Pad\'e approximation $\gamma_{pade}=1.8081$ - an exact match. Figure 1
displays the effect of a sinc disturbance $w(t)$ on error in states $e(t)=\hat x(t)-x(t)$ using our optimized estimator. For this step disturbance, the actual $L_2$-gain is found to be $\gamma_{real}=0.5876$ - consistent with the predicted worst-case performance bound.

\begin{figure}
	\centering
	\includegraphics[width=.4\textwidth]{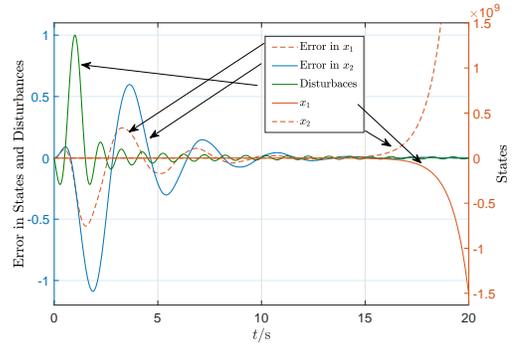}\vspace{-3mm}
	\caption{Response in error state to a sinc disturbance for E1}\label{fig:ex_b3}
\end{figure}

\paragraph{Example 2}
Consider now a slightly version of the Example in~\cite{de}.
\begin{align*}
&\dot x(t)=\bmat{0 &3\\-4& -5}x(t)+\bmat{-0.1 &0\\0.2&-0.2}x(t-0.3)\\
&\quad +\bmat{0&0.1\\-0.2&-0.3}x(t-0.5)+\bmat{-0.4545&0\\0&0.9090}w(t)\\
&y(t)=\bmat{0 & 100}x(t)+\bmat{0 &10}x(t-0.3)\\
&\quad \quad +\bmat{0 &2}x(t-0.5)+\bmat{1&1}w(t)\\
&z(t)=\bmat{0 & 100}x(t)
\end{align*}
wherein we have added an extra delay. In this case Theorem~\ref{thm:Thm_f} yields $\gamma_{min}=0.9592$. Meanwhile the Pad\'e approximation $\gamma_{pade}=0.9592$ - an exact match. Figure 2
displays the effect of a sinc disturbance $w(t)$ on error in states $e(t)=\hat x(t)-x(t)$ using our optimized estimator. For this step disturbance, the actual $L_2$-gain is found to be $\gamma_{real}=0.5792$ - consistent with the predicted worst-case performance bound.

\begin{figure}
	\centering
	\includegraphics[width=.35\textwidth]{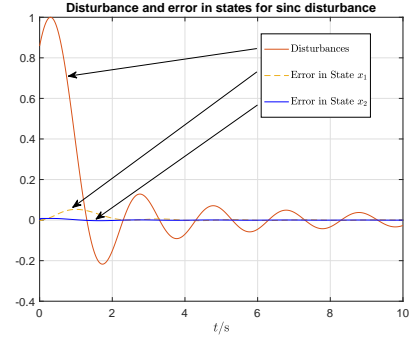}\vspace{-3mm}
	\caption{Response in error state to a sinc disturbance for E2}\label{fig:ex_b3}
\end{figure}

\paragraph{Example 3}
To test the scalability of our algorithm, we consider the following unstable n-D system with K delays, a single disturbance $w(t)$ and a single regulated $z(t)$ and a single sensed output $y(t)$.
\begin{align*}
\dot x(t)&=-\sum_{i=1}^K\frac{x(t-i/K)}{K}+\mbf 1w(t)\\
z(t)&=y(t)=\mbf 1^Tx(t)+\mbf 1^Tw(t)
\end{align*}
We examine how the computational complexity of the algorithm scales as the
product of the number of delays $K$ and number of states $n$ increases. Table I lists the computation time as CPU sec on a Intel i7-5960X processor omitting preprocessing and postprocessing times.

\begin{table}
	\caption{CPU Seconds of Sedumi Solving Process for $n$ States and $K$ Delays}
	\hbox{\begin{tabular}{c|c|c|c|c|c}
			\backslashbox{$K$}{$n$}  & $1$  & $2$ & $3$  & $4$ &6\\
			\hline
			$1$ &  0.3610  & 0.4630 & 8.488 &  1.887 & 16.50 \\\hline
			$2$ &  0.4380  & 1.573 & 11.94 &  77.94 & 950.8 \\
			\hline
			$3$ & 0.9000 & 10.14 & 167.0  & 913.9 &9827\\
			\hline
			$4$ & 1.331 & 82.92 & 912.6  & 4263&24030\\
			\hline
			$6$ & 12.10 & 967.2  & 9650  & 23980 &N/A\\
	\end{tabular}}
\end{table}


\section{Conclusion}
We have investigated the problem of $H_\infty$-optimal estimator design for systems with multiple delays in the states, outputs and disturbances. The commonly used DDE representation of nominal system and estimator is converted to a PIE representation. Within the PIE framework, we propose a convex formulation of the optimal estimator synthesis problem, in the form of an LPI - a form of convex optimization for which we have an efficient Matlab Toolbox. We then convert the optimized observer back into a coupled ODE-PDE for convenient implementation. Applying the results to several numerical examples, we find the resulting observers are non-conservative to 4 decimal places as measured against a Pad\'e-based ODE approximation of the DDE. Finally, the scalability of the algorithm is demonstrated for large numbers of delays and states.
\vspace{-0.4cm}

\vspace{-0.15cm}
\begin{appendix}
	\subsection{Getting the inverse of $\mcl P$}
	\begin{lem}\label{lem:inverse}
		Suppose that $Q(s) = HZ(s)$ and $R_1(s,\theta)=Z(s)^T \Gamma Z(\theta)$ where $Z$ is a polynomial and $\mcl P:=\fourpi{P}{Q}{Q^T}{R_i}$ with $R_2=R_1$ is a coercive and self-adjoint operator where $\mcl P: X \rightarrow X$. Then ${\mcl P}^{-1}:=\fourpi{\hat P}{\hat Q}{\hat Q^T}{\hat R_i}$ with $\hat R_1=\hat R_2$
		where
		\begin{align*}
		\hat P & = \left(I- \hat H K H^T\right)P^{-1},\qquad    \hat Q(s)  =\hat H Z(s)R_0(s)^{-1} \\
		\hat R_0(s) & = R_0(s)^{-1},\quad   \hat R_{1}(s,\theta)=\hat R_0^T(s) Z(s)^T\hat \Gamma Z(\theta)\hat R_0(\theta),\\
		K&=\int_{-1}^0  Z(s)R_0(s)^{-1} Z(s)^Tds\\
		\hat H &=P^{-1} H \left( K H^T P^{-1} H -I- K\Gamma\right)^{-1}\\
		\hat \Gamma&=-(\hat H^T H+\Gamma)(I + K \Gamma)^{-1}.
		\end{align*}
		Further, ${\mcl P}^{-1}:X\rightarrow X$ is self-adjoint, and ${\mcl P}^{-1}\mcl P \mbf x=\mcl P\mcl P^{-1} \mbf x=\mbf x$ for any $\mbf x  \in X := Z_{m,n}$.
	\end{lem}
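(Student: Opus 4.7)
The plan is to verify the identity $\mcl P \hat{\mcl P} = I$ directly, by applying the PI-operator composition rule (cf.~\cite{Sachin:19}) to the explicit candidate $\hat{\mcl P}$ and checking that the composed parameters agree with those of the identity operator. The separable structure $Q(s)=HZ(s)$, $R_1(s,\theta)=Z(s)^T \Gamma Z(\theta)$ (and the analogous structure imposed on $\hat Q, \hat R_1$) makes this tractable: every spatial integral arising during composition collapses into the single finite-dimensional matrix $K = \int_{-1}^{0} Z(s) R_0(s)^{-1} Z(s)^T\, ds$ already named in the lemma.

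First, I would compute $\mcl P\hat{\mcl P}$ block by block. The pointwise $\phi(s)$ contribution to the lower-right block gives $R_0(s)\hat R_0(s) = I$, which immediately forces $\hat R_0(s) = R_0(s)^{-1}$ and reduces the rest of the problem to a purely finite-dimensional matrix system. Collecting the coefficients of $x$ and of the functional $m(\phi) := \int Z(s) R_0(s)^{-1}\phi(s)\,ds$ in the two output rows yields the matrix equations
\begin{align*}
P\hat P + H K \hat H^T &= I, & P\hat H + H(I + K\hat\Gamma) &= 0,\\
H^T \hat P + (I+\Gamma K)\hat H^T &= 0, & H^T \hat H + \Gamma + (I+\Gamma K)\hat\Gamma &= 0,
\end{align*}
for the three unknown finite-dimensional blocks $\hat P,\hat H,\hat\Gamma$.

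Next, I would solve this system in sequence. The second equation gives $\hat H = -P^{-1}H(I + K\hat\Gamma)$; substituting into the fourth equation yields a single matrix equation in $\hat\Gamma$ which, after a Woodbury-type rearrangement using the identity $(I+\Gamma K)^{-1}\Gamma = \Gamma(I+K\Gamma)^{-1}$, produces the stated formula $\hat\Gamma = -(\hat H^T H + \Gamma)(I + K\Gamma)^{-1}$. Back-substitution then yields the closed form $\hat H = P^{-1}H(KH^T P^{-1} H - I - K\Gamma)^{-1}$, and the first equation gives $\hat P = (I - \hat H K H^T)P^{-1}$. The third equation becomes the transpose of the second after using symmetry of $P,\Gamma,K$, and is therefore automatically satisfied; the reverse composition $\hat{\mcl P}\mcl P = I$ follows by the symmetric form of the derived expressions, together with the general principle that a bounded operator with an algebraic two-sided inverse is its (unique) inverse.

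The main obstacle is justifying invertibility of the finite-dimensional blocks $I + \Gamma K$, $I + K\Gamma$, and $KH^T P^{-1}H - I - K\Gamma$ that appear in the closed-form expressions, together with establishing that the resulting PI operator $\hat{\mcl P}:X\to X$ is bounded and self-adjoint. Invertibility will be deduced from the coercivity of $\mcl P$: coercivity implies $P$ is positive definite and $R_0(s)$ is positive definite almost everywhere (so $K$ is well-defined and positive definite), and it forces strict positivity of the Schur complements underlying each of the blocks above when evaluated on rank-one test vectors of the form $(x, Z(s)^T v)$. Boundedness of $\hat{\mcl P}$ then follows from boundedness of its individual parameters, and self-adjointness follows from the symmetric form of the derived formulas combined with symmetry of $P,\Gamma,R_0,K$.
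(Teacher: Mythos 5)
Your proposal is correct in substance but takes a genuinely different route from the paper: the paper dispatches this lemma in one line by citing Theorem 3 of \cite{Miao} (the inversion formula for PI operators with separable kernels, specialized to $r=1$), whereas you re-derive that result from scratch by composing $\mcl P$ with the candidate $\hat{\mcl P}$ and matching coefficients. Your reduction is the right one: with $R_2=R_1=Z(s)^T\Gamma Z(\theta)$ the two kernel integrals merge, and the composition acting on $\bmat{x\\\phi}$ collapses onto the coefficient of $\phi(s)$ (forcing $\hat R_0=R_0^{-1}$) plus the coefficients of $x$ and of $m(\phi)=\int_{-1}^0 Z(s)R_0(s)^{-1}\phi(s)\,ds$, which give exactly the four matrix equations you wrote ($P\hat P+HK\hat H^T=I$, $P\hat H+H(I+K\hat\Gamma)=0$, $H^T\hat P+(I+\Gamma K)\hat H^T=0$, $H^T\hat H+\Gamma+(I+\Gamma K)\hat\Gamma=0$); solving them in the order you propose does reproduce the stated $\hat\Gamma$, $\hat H$, $\hat P$. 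What the citation buys the paper is precisely the part you only sketch: the nonsingularity of $I+K\Gamma$, $I+\Gamma K$ and $KH^TP^{-1}H-I-K\Gamma$, and boundedness of the result, are the content of Miao et al.'s theorem; what your route buys is a self-contained, transparent proof that never leaves the composition algebra of \cite{Sachin:19}. If you write it out, two points need care. First, the coercivity argument should be run explicitly: e.g. $\phi(s)=R_0(s)^{-1}Z(s)^Tv$, $x=0$ gives $v^T(K+K\Gamma K)v>0$, hence $I+K^{1/2}\Gamma K^{1/2}\succ0$ and $\det(I+K\Gamma)=\det(I+\Gamma K)\neq0$; an analogous choice with $x=-P^{-1}HKv$ handles $I+K\Gamma-KH^TP^{-1}H$; this also requires the rows of $Z$ to be linearly independent in $L_2$ so that $K\succ0$ (otherwise one must argue through $K^{1/2}$ and the nonzero spectrum only). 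Second, the cleanest way to get the two-sided inverse is not a generic ``algebraic inverse'' principle but self-adjointness: verify from your formulas that $\hat P$ and $\hat\Gamma$ are symmetric (this uses $MK=KM^T$ for $M=KH^TP^{-1}H-I-K\Gamma$, which holds since $P,\Gamma,K$ are symmetric), so $\hat{\mcl P}$ is self-adjoint and $\hat{\mcl P}\mcl P=(\mcl P\hat{\mcl P})^*=I$ follows by taking adjoints.
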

	
	\begin{proof}
		This can be obtained from Theorem 3 in \cite{Miao} when we set $r=1$.
	\end{proof}
\subsection{Constructing Estimator Gains}
An analytic inverse of a generalized PI operator  $\mcl P:=\fourpi{P}{Q_1}{Q_2}{R_i}$ is an open problem. However, an exact formula is known for the inverse of $\mcl{P}$ when $R_2=R_1$, see \cite{Peet:19_estimator} and \cite{Peet:19_control}.
We find the observer gains in following Lemma.
\begin{lem}\label{lem:gains}
	Suppose PI operator $\mcl P:=\fourpi{P}{Q}{Q^T}{ R_i}$ with $R_2=R_1$ is bounded, self-adjoint and coercive. If $\mcl L = \mcl P^{-1}\mcl Z$ where $Z:=\fourpi{Z_1}{0\mkern-9.5mu/}{Z_2}{0\mkern-9.5mu/}$ and $Z_2$ is a polynomial represented as $Z_2(s)=Z^T(s)W$,
	then we get $\mcl L=\fourpi{L_1}{0\mkern-9.5mu/}{\bmat{L_{21}\\\vdots\\L_{2K}}}{0\mkern-9.5mu/}$ with $L_{1}=\hat{P} Z_{1}+\hat{H} K W$ and
	\begin{align*}
	\bmat{L_{21}\\\vdots\\L_{2K}}(s)&=\hat{R}_0(s) Z(s)^{T}\left(\hat{H}^{T} Z_{1}+W+\hat{\Gamma}KW\right)
	\end{align*} where $K$, $\Gamma$ and $\hat{H}$ are as defined in Lemma \ref{lem:inverse}.
\end{lem}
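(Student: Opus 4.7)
The plan is to compute the composition $\mcl{P}^{-1}\mcl{Z}$ directly, using the explicit form of $\mcl{P}^{-1}$ supplied by Lemma~\ref{lem:inverse} and the defining action of PI operators in Eq.~\eqref{eqn:PQRS}, and then read off the resulting finite-dimensional and $L_2$-valued blocks as $L_1$ and $[L_{21};\dots;L_{2K}]$ respectively. Because $\mcl{Z}$ sends an input vector $v$ to the pair $(Z_1 v,\,Z_2(\cdot)v)$ with $Z_2(s)=Z(s)^T W$, applying $\mcl{P}^{-1}=\fourpi{\hat P}{\hat Q}{\hat Q^T}{\hat R_i}$ to this pair produces precisely the two components of $\mcl{L}v$, so the entire proof is a book-keeping exercise in matching terms.

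For the finite-dimensional block, Eq.~\eqref{eqn:PQRS} gives
\[
L_1 \;=\; \hat P\, Z_1 \;+\; \int_{-1}^{0} \hat Q(s)\, Z_2(s)\,ds.
\]
Substituting $\hat Q(s)=\hat H Z(s)R_0(s)^{-1}$ and $Z_2(s)=Z(s)^T W$, I can pull $\hat H$ and $W$ outside the integral to obtain $\hat H\bigl(\int_{-1}^{0} Z(s)R_0(s)^{-1}Z(s)^T ds\bigr)W = \hat H K W$, which together with the leading $\hat P Z_1$ yields the claimed $L_1=\hat P Z_1+\hat H K W$.

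For the $L_2$-valued block,
\[
\bmat{L_{21}\\ \vdots\\ L_{2K}}\!(s) \;=\; \hat Q^T(s)Z_1 \;+\; \hat R_0(s)Z_2(s) \;+\; \int_{-1}^{s} \hat R_1(s,\theta)Z_2(\theta)d\theta \;+\; \int_{s}^{0} \hat R_2(s,\theta)Z_2(\theta)d\theta.
\]
Because Lemma~\ref{lem:inverse} guarantees $\hat R_1=\hat R_2=\hat R_0^T(s)Z(s)^T\hat\Gamma Z(\theta)\hat R_0(\theta)$, the two half-interval integrals fuse into a single integral over $[-1,0]$, and pulling $\hat R_0^T(s)Z(s)^T\hat\Gamma$ outside leaves the inner factor $\int_{-1}^{0} Z(\theta)\hat R_0(\theta)Z(\theta)^T d\theta\cdot W = KW$. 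Self-adjointness of $\mcl{P}$ combined with $R_2=R_1$ forces $R_0(s)=R_0(s)^T$, hence $\hat R_0(s)=\hat R_0(s)^T=R_0(s)^{-1}$; this lets me rewrite $\hat Q^T(s)Z_1 = \hat R_0(s)Z(s)^T\hat H^T Z_1$ and $\hat R_0(s)Z_2(s)=\hat R_0(s)Z(s)^T W$, so all three surviving terms carry the common left factor $\hat R_0(s)Z(s)^T$. Collecting them yields $\hat R_0(s)Z(s)^T(\hat H^T Z_1 + W + \hat\Gamma K W)$, matching the claim.

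The only real obstacle is the algebraic bookkeeping: keeping track of the four PI composition terms produced by $\mcl{P}^{-1}\mcl{Z}$, recognizing that the two half-interval integrals collapse thanks to $\hat R_1=\hat R_2$, and invoking the symmetry $R_0=R_0^T$ (inherited from self-adjointness with $R_2=R_1$) to clear transposes. No analytic or approximation arguments are needed; once Lemma~\ref{lem:inverse} is in hand the identifications for $L_1$ and the $L_{2i}$ follow by inspection.
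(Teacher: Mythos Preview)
Your proposal is correct and follows essentially the same approach as the paper: invoke Lemma~\ref{lem:inverse} to obtain the explicit form of $\mcl{P}^{-1}$, then compute the composition $\mcl{P}^{-1}\mcl{Z}$. The only difference is that the paper simply cites the general PI composition formula from~\cite{Sachin:19} and declares the result, whereas you carry out the block-by-block computation explicitly (including the observation that self-adjointness with $R_2=R_1$ forces $R_0=R_0^T$, allowing the common factor $\hat R_0(s)Z(s)^T$ to be extracted); your version is thus a more detailed execution of the same argument.
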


\begin{proof}
	Since $\mcl P$ is coercive, bounded, $\mcl P^{-1}=\fourpi{\hat P}{\hat Q}{\hat Q^T}{ \hat R_i}$ exists and can be obtained from Lemma \ref{lem:inverse}.
	Then follows from the formula for composition of PI operators $\mcl P^{-1}$ and $\mcl Z$ - see \cite{Sachin:19} for the formula for the composition operation.
\end{proof}	
\end{appendix}

\subsection{The equivalence between the coupled DDEs with ODE-PDE Equation and the coupled PIEs}

Consider the following coupled system dynamics,
{\small{
\begin{align}
&\begin{cases}
\dot{x}(t)=A_0x(t)+Bw(t)+\sum_{i=1}^K(A_{i}x(t-\tau_i)+B_iw(t-\tau_i))\\
z(t)=C_{1}x(t)+D_1w(t)+\sum_{i=1}^K(C_{1i}x(t-\tau_i)+D_{1i}w(t-\tau_i))\\
y(t)=C_{2}x(t)+D_2w(t)+\sum_{i=1}^K(C_{2i}x(t-\tau_i)+D_{2i}w(t-\tau_i))\end{cases}\label{eqn:DDE_ap}
\end{align}
\begin{align}
&\begin{cases}
\dot {\hat x}(t)=A_0\hat x(t)+B_v\hat v(t)+L_1(\hat y(t)-y(t))\\
\hat z(t)=C_{1}\hat x(t)+D_{1v}\hat v(t)\\
\hat y(t)=C_{2}\hat x(t)+D_{2v}\hat v(t)\\
\dot {\hat \phi}_i(t,s)=\frac{1}{\tau_i}\partial_s \hat \phi_i(t,s)+L_{2i}(s)(\hat y(t)-y(t))\\
\hat \phi(t,0)=C_{ri}\hat x(t), \
\hat v(t)=\sum_{i=1}^K C_{vi}\hat \phi_i(t,-1)\end{cases}
\label{eqn:ODE_PDE_es_ap}
\end{align}
and the coupled linear PIEs
\begin{align}
&\begin{cases}
\mcl T\dot{\mbf{x}}(t)+\mcl B_T\dot w(t)=\mcl{A}\mbf{x}(t)+\mcl{B} \omega (t)\\
z(t)=\mcl{C}_1\mbf{x}(t)+\mcl D_1 \omega(t)\\
y(t)=\mcl{C}_2\mbf{x}(t)+\mcl D_2 \omega(t)\end{cases}\label{eqn:PIE1_ap}\\
&\begin{cases}
\mcl T\dot{\hat{\mbf{x}}}(t)=\mcl{A}\hat{\mbf{x}}(t)+\mcl{L}(\hat{y}(t)-y(t))\\
\hat z(t)=\mcl{C}_1\hat{\mbf{x}}(t), \ \hat{y}(t)=\mcl{C}_2\hat{\mbf{x}}(t).\end{cases}\label{eqn:PIE2_ap}
\end{align}
These two coupled systems share the same solutions, as in the following lemma. We define the PI operators as
 	\begin{align}
 \mcl T&:=\fourpi{I}{0}{T_0}{0,0,-I},& \mcl B_T&:=\fourpi{0}{0\mkern-9.5mu/}{T_1}{0\mkern-9.5mu/}, \notag\\
 \mcl A&:=\fourpi{A_0+\sum_{i=1}^K A_i }{\tilde A}{0}{H,0,0},
 & \mcl B&:=\fourpi{B+\sum_{i=1}^KB_{i}}{0\mkern-9.5mu/}{0}{0\mkern-9.5mu/}, \notag
 \\
 \mcl C_1&:=\fourpi{C_{1}+\sum_{i=1}^KC_{1i}}{\tilde C_1}{0\mkern-9.5mu/}{0\mkern-9.5mu/},
 & \mcl C_2&:=\fourpi{C_{2}+\sum_{i=1}^KC_{2i}}{\tilde C_2}{0\mkern-9.5mu/}{0\mkern-9.5mu/}, \notag
 \\\mcl D_1&:=\fourpi{D_1+\sum_{i=1}^KD_{1i}}{0\mkern-9.5mu/}{0\mkern-9.5mu/}{0\mkern-9.5mu/},
 & \mcl D_2&:=\fourpi{D_2+\sum_{i=1}^KD_{2i}}{0\mkern-9.5mu/}{0\mkern-9.5mu/}{0\mkern-9.5mu/},\notag \\
 L&=\fourpi{L_1}{0\mkern-9.5mu/}{\bmat{L_{21}\\\vdots\\L_{2K}}}{0\mkern-9.5mu/}. \label{eqn:PIs_ap}
 \end{align}
 where

 {\small{
 		\begin{align}
 		&C_{ri}=\bmat{I\\0}, \quad B_{ri}=\bmat{0\\I},  \quad A_{ki}=\bmat{A_i&B_i}, \notag\\
 		&C_{k1i}=\bmat{C_{1i}&D_{1i}},  \quad C_{k2i}=\bmat{C_{2i}&D_{2i}},\notag\\
 		&T_0=\bmat{C_{r1}\\ \vdots\\ C_{rK}}, \quad
 		T_1=\bmat{B_{r1}\\ \vdots\\ B_{rK}}, \quad H=\text{diag}\left\{\frac{1}{\tau_1}I,\cdots,\frac{1}{\tau_K}I \right\}, \notag\\
 		&\tilde A=-\bmat{A_{k1},&\cdots,&A_{kK}},\quad
 		\tilde C_1=-\bmat{C_{k11},&\cdots,&C_{k1K}}, \notag\\
 		& \tilde C_2=-\bmat{C_{k21},&\cdots,&C_{k2K}},\label{Eqn:ACC_ap}
 		\end{align}}}
\begin{lem}
	\label{lem:Equivalence} Suppose $\mcl T, \mcl B_T, \mcl A, \mcl B, \mcl C_1, \mcl C_2, \mcl D_1, \mcl D_2, \mcl L,$ are as defined above. For given $w\in W^{1,2}[0,\infty)^r$, if $x$,  $z$, $y$, $\hat x$, $\hat z$, $\hat y$, $\hat \phi_i$ satisfy  Eq.~\eqref{eqn:DDE_ap}--\eqref{eqn:ODE_PDE_es_ap}, then $z$, $y$, $\hat z$ and $\hat y$ also satisfy the PIE defined by  \eqref{eqn:PIE1_ap}--\eqref{eqn:PIE2_ap}
	and \begin{align}
	\mbf x(t)=\bmat{x(t)\\\partial_s\phi_1(t,\cdot)\\\vdots\\\partial_s\phi_K(t,\cdot)}, \quad
	\hat{\mbf x}(t)=\bmat{\hat x(t)\\\partial_s\hat \phi_1(t,\cdot)\\\vdots\\\partial_s\hat \phi_K(t,\cdot)}, \label{Eqn:x_hatx_ap}
	\end{align}
	where $\phi_i=C_{ri}x(t+\tau_is)+B_{ri}w(t+\tau_is)$.
	Furthermore, if $\mbf x$, $\hat{\mbf x}$, $z$ and $y$ satisfy the PIE defined by Eq.\eqref{eqn:PIE1_ap}--\eqref{eqn:PIE2_ap},
	then $z$, $y$, $\hat z$, and $\hat y$ also satisfy Eq.~\eqref{eqn:DDE_ap}--\eqref{eqn:ODE_PDE_es_ap} where
	\begin{align}
	\bmat{x(t)\\\cdot}=\mcl T\mbf x(t)+\mcl B_Tw(t),
	\bmat{\hat x(t)\\\cdot}=\mcl T\hat{\mbf x}(t).
	\end{align}
\end{lem}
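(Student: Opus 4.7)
The plan is to reduce this equivalence to the already-established Lemma~\ref{lem:ODEPDE_PIE}, which handles the purely plant-side correspondence between the DDE~\eqref{eqn:DDE_ap} and the PIE~\eqref{eqn:PIE1_ap}. Observe that once we regard the innovation signal $\hat y(t)-y(t)$ as a known exogenous input, the estimator equations~\eqref{eqn:ODE_PDE_es_ap} have exactly the structure of the ODE-PDE representation~\eqref{eqn:PDE_ODE} used in the proof of Lemma~\ref{lem:ODEPDE_PIE}, with two differences: the disturbance $w$ is absent from the boundary condition (so $\hat \phi_i(t,0)=C_{ri}\hat x(t)$ carries no $B_{ri}w$ term), and there are additional injection terms $L_1(\hat y-y)$ and $L_{2i}(s)(\hat y-y)$ entering the ODE and PDE respectively.

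First I would invoke Lemma~\ref{lem:ODEPDE_PIE} verbatim for the plant component, which gives the DDE$\Leftrightarrow$PIE equivalence for $x,z,y$ together with the $\mbf x(t)$ of~\eqref{Eqn:x_hatx_ap}. Second, for the estimator, define $\hat{\mbf x}(t)$ as in~\eqref{Eqn:x_hatx_ap} and verify the identity $\mcl T\hat{\mbf x}(t)=\bmat{\hat x(t)\\\hat\phi(t,\cdot)}$ by exactly the same computation that is used in Lemma~\ref{lem:ODEPDE_PIE} to show $\mcl T\mbf x(t)+\mcl B_T w(t)=\bmat{x(t)\\\phi(t,\cdot)}$; the term $\mcl B_T$ is absent in the estimator version because the boundary value $\hat\phi_i(t,0)=C_{ri}\hat x(t)$ has no disturbance dependence, which is exactly why the PIE~\eqref{eqn:PIE2_ap} has no $\mcl B_T\dot{\hat w}$ on the left-hand side. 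Differentiating this identity in $t$ and plugging in the ODE-PDE dynamics~\eqref{eqn:ODE_PDE_es_ap}, the terms $A_0\hat x+B_v\hat v$ and $\tfrac{1}{\tau_i}\partial_s\hat\phi_i$ assemble into $\mcl A\hat{\mbf x}(t)$ by the same algebra as in the plant case (Lemma 4 of \cite{Peet:19_tds}), while the stacked injection terms $L_1$ and $L_{2i}(s)$ combine exactly into the PI operator $\mcl L$ of~\eqref{eqn:PIs_ap} acting on the scalar $\hat y-y$. The output equations $\hat z=\mcl C_1\hat{\mbf x}$ and $\hat y=\mcl C_2\hat{\mbf x}$ then follow from the definition of $\mcl C_1,\mcl C_2$ applied to $\hat{\mbf x}$, reproducing $C_1\hat x+D_{1v}\hat v$ and $C_2\hat x+D_{2v}\hat v$ after the boundary-to-interior integration-by-parts identity is used.

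The reverse direction is fully symmetric: starting from a solution $\hat{\mbf x}$ of~\eqref{eqn:PIE2_ap}, the formula $\bmat{\hat x(t)\\\cdot}=\mcl T\hat{\mbf x}(t)$ recovers $\hat x(t)$ and, componentwise, recovers $\hat\phi_i(t,s)$ on $s\in[-1,0]$; substituting back into the PIE and reading off block rows gives the ODE, the transport PDE, the boundary condition, and the definition of $\hat v$. The corresponding plant-side recovery is already supplied by Lemma~\ref{lem:ODEPDE_PIE}.

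The main obstacle will be purely bookkeeping: the stacked definitions of $T_0,T_1,H,\tilde A,\tilde C_1,\tilde C_2$ in~\eqref{Eqn:ACC_ap} couple the $K$ delay channels together, and one must carefully match the $i$-th block row of the $Q_2$-entry of $\mcl L$ to the injection $L_{2i}(s)$, and likewise verify that the diagonal $H=\diag\{\tfrac{1}{\tau_i}I\}$ in $\mcl A$ produces the correct per-channel transport speed $\tfrac{1}{\tau_i}\partial_s\hat\phi_i$. No new ideas beyond those already developed in the proof of Lemma~\ref{lem:ODEPDE_PIE} and the composition rules for PI operators of~\cite{Sachin:19} are required; the computation is mechanical once the template of Lemma~\ref{lem:ODEPDE_PIE} is adopted for the estimator block.
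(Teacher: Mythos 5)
Your proposal is correct and follows essentially the same route as the paper's proof: handle the plant block by invoking Lemma~\ref{lem:ODEPDE_PIE}, then for the estimator block define $\hat{\mbf x}$ as in~\eqref{Eqn:x_hatx_ap}, use the fundamental theorem of calculus and the boundary condition $\hat\phi_i(t,0)=C_{ri}\hat x(t)$ to compute $\mcl T\hat{\mbf x}$, $\mcl A\hat{\mbf x}$, $\mcl C_i\hat{\mbf x}$, and stack the injections $L_1$, $L_{2i}(s)$ into the PI operator $\mcl L$, with the converse read off blockwise. The only cosmetic difference is your framing of $\hat y-y$ as an exogenous input, which the paper does implicitly.
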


\vspace{0.3cm}
\begin{proof}
	For given $w\in W^{1,2}[0,\infty)^r$, suppose $x$,  $z$, $y$, $\hat x$, $\hat z$, $\hat y$, and $\hat \phi_i$ satisfy Eq.~\eqref{eqn:DDE_ap}--\eqref{eqn:ODE_PDE_es_ap}.
	Then, from Lemma~\ref{lem:ODEPDE_PIE}, $\mbf x$, $z$ $y$, $\hat x$, $\hat z$, $\hat y$, and $\hat \phi_i$ also satisfy Eq.~\eqref{eqn:PIE1_ap} and \eqref{eqn:ODE_PDE_es_ap} where
	\begin{align}
	\mbf x(t)=\bmat{x(t)\\\partial_s\phi_1(t,\cdot)\\\vdots\\\partial_s\phi_K(t,\cdot)}
	\end{align}
	and $\phi_i(t,s)=C_{ri}x(t+\tau_is)+B_{ri}w(t+\tau_is)$, and vice versa.
	
	For $\hat x$, $\hat \phi_i$ defined in Eq.~\eqref{eqn:ODE_PDE_es_ap}, define
	\begin{align}
	\hat{\mbf x}(t)=\bmat{\hat x(t)\\\partial_s\hat \phi_1(t,\cdot)\\\vdots\\\partial_s\hat \phi_K(t,\cdot)}.
	\end{align}
	Using Fundamental Theorem of Calculus and boundary conditions, we get
	\begin{align*}
	\mcl{T}\hat{\mbf x}(t) = \bmat{\hat x(t)\\\hat \phi_1(t,\cdot)\\\vdots\\\hat\phi_K(t,\cdot)},\quad \mcl{A}\hat{\mbf x}(t) = \bmat{A_0\hat x(t)+B_v\hat v(t)\\\frac{1}{\tau_1}\partial_s \hat \phi_1(t,s)\\\vdots\\\frac{1}{\tau_K}\partial_s \hat \phi_K(t,s)}
	\end{align*}
	and
	\begin{align*}
	\mcl{C}_i\hat{\mbf x}(t) = C_{i}\hat x(t)+D_{iv}\hat v(t).
	\end{align*}
	Then \begin{align*}
	\mcl T\dot{\hat{\mbf{x}}}(t)&=\mcl{A}\hat{\mbf{x}}(t)+\bmat{L_1(\hat{y}(t)-y(t))\\\bmat{L_{21}\\\vdots\\L_{2K}}(\hat{y}(t)-y(t))}\\
	\hat z(t)&=\mcl{C}_1\hat{\mbf{x}}(t),  \hat{y}(t)=\mcl{C}_2\hat{\mbf{x}}(t).
	\end{align*}
	Finally, using PI notation for the observer gains $\mcl L$ in Eq.~\eqref{eqn:PIs_ap}, we get \eqref{eqn:PIE2_ap}. Then, any $\hat x$, $\hat \phi_i$, $\hat{z}$, $\hat{y}$, $y$ that satisfies Eq.~\eqref{eqn:ODE_PDE_es_ap}, $\mbf {\hat x}$, $\hat z$, $\hat y$, $y$  also satisfy  \eqref{eqn:PIE2_ap}, where \begin{align}
	\hat{\mbf x}(t)=\bmat{\hat x(t)\\\partial_s\hat \phi_1(t,\cdot)\\\vdots\\\partial_s\hat \phi_K(t,\cdot)},
	\end{align} and vice versa.
	
	Then, for given $w\in W^{1,2}[0,\infty)^r$, if $x$,  $z$, $y$, $\hat x$, $\hat z$, $\hat y$, and $\hat \phi_i$ satisfy Eq.~\eqref{eqn:DDE_ap}--\eqref{eqn:ODE_PDE_es_ap}, then $z$, $y$, $\hat z$ and $\hat y$ also satisfy the PIE defined by  \eqref{eqn:PIE1_ap}--\eqref{eqn:PIE2_ap} for $\mbf{x}$ and $\hat{\mbf{x}}$ as defined in Eq.~\eqref{Eqn:x_hatx_ap}.
\end{proof}	

\end{document}